\newtheorem{theorem}{Theorem}
\theoremstyle{plain}
\newtheorem{lemma}{Lemma}
\newtheorem{remark}{Remark}
\numberwithin{equation}{section}
\begin{document}
\title[Optimal Hardy--Littlewood inequalities uniformly bounded]{Optimal Hardy--Littlewood inequalities uniformly bounded by a
universal constant}
\author[Albuquerque]{N. Albuquerque}
\address[N. Albuquerque]{Departamento de Matem\'{a}tica \\
\indent Universidade Federal da Para\'{\i}ba \\
\indent 58.051-900 - Jo\~{a}o Pessoa, Brazil.}
\email{ngalbqrq@mat.ufpb.br}
\author[Ara\'{u}jo]{G. Ara\'{u}jo}
\address[G. Ara\'{u}jo]{Departamento de Matem\'{a}tica \\
\indent	Universidade Estadual da Para\'{\i}ba \\
\indent	58.429-500 - Campina Grande, Brazil.}
\email{gustavoaraujo@cct.uepb.edu.br}
\author[Maia]{M. Maia}
\address[M. Maia]{Departamento de Matem\'{a}tica \\
\indent Universidade Federal da Para\'{\i}ba \\
\indent 58.051-900 - Jo\~{a}o Pessoa, Brazil.}
\email{mariana.britomaia@gmail.com}
\author[Nogueira]{T. Nogueira}
\address[T. Nogueira]{Departamento de Matem\'{a}tica \\
\indent Universidade Federal da Para\'{\i}ba \\
\indent 58.051-900 - Jo\~{a}o Pessoa, Brazil.}
\email{tonykleverson@gmail.com}
\author[Pellegrino]{D. Pellegrino}
\address[D. Pellegrino]{Departamento de Matem\'{a}tica \\
\indent Universidade Federal da Para\'{\i}ba \\
\indent 58.051-900 - Jo\~{a}o Pessoa, Brazil.}
\email{pellegrino@pq.cnpq.br}
\author[Santos]{J. Santos}
\address[J. Santos]{Departamento de Matem\'{a}tica \\
\indent Universidade Federal da Para\'{\i}ba \\
\indent 58.051-900 - Jo\~{a}o Pessoa, Brazil.}
\email{joedson@mat.ufpb.br}
\thanks{D. Pellegrino is supported by CNPq}
\keywords{Hardy--Littlewood inequality}

\begin{abstract}
The Hardy--Littlewood inequality for $m$-linear forms on $\ell _{p}$ spaces
and $m<p\leq 2m$ asserts that
\begin{equation*}
\left( \sum_{j_{1},...,j_{m}=1}^{\infty }\left\vert T\left( e_{j_{1}},\ldots
,e_{j_{m}}\right) \right\vert ^{\frac{p}{p-m}}\right) ^{\frac{p-m}{p}}\leq
2^{\frac{m-1}{2}}\left\Vert T\right\Vert
\end{equation*}%
for all continuous $m$-linear forms $T:\ell _{p}\times \cdots \times \ell
_{p}\rightarrow \mathbb{R}$ or $\mathbb{C}.$ The case $m=2$ recovers a
classical inequality proved by Hardy and Littlewood in 1934. As a
consequence of the results of the present paper we show that the same
inequality is valid with $2^{\frac{m-1}{2}}$ replaced by $2^{\frac{\left(
m-1\right) \left( p-m\right) }{p}}$. In particular, for $m<p\leq m+1$ the
optimal constants of the above inequality are uniformly bounded by $2.$
\end{abstract}

\maketitle

\section{Introduction}

The famous Littlewood's $4/3$ inequality \cite{LLL}, proved in 1930, asserts
that%
\begin{equation*}
\left( \sum_{j,k=1}^{\infty }\left\vert T(e_{j},e_{k})\right\vert ^{\frac{4}{%
3}}\right) ^{\frac{3}{4}}\leq \sqrt{2}\left\Vert T\right\Vert
\end{equation*}%
for all continuous bilinear forms $T\colon c_{0}\times c_{0}\rightarrow
\mathbb{C}$, and the exponent $4/3$ cannot be improved. Besides its own
beauty, Littlewood's insights motivated further important works of
Bohnenblust and Hille (1931) and Hardy and Littlewood (1934).
Bohnenblust--Hille inequality \cite{bh} assures the existence of a constant $%
B_{m}\geq 1$ such that
\begin{equation}
\left( \sum_{j_{1},\cdots ,j_{m}=1}^{\infty }\left\vert T(e_{j_{1}},\cdots
,e_{j_{m}})\right\vert ^{\frac{2m}{m+1}}\right) ^{\frac{m+1}{2m}}\leq
B_{m}\left\Vert T\right\Vert ,  \label{u88}
\end{equation}%
for all continuous $m$--linear forms $T\colon c_{0}\times \cdots \times
c_{0}\rightarrow \mathbb{C}$. The case $m=2$ recovers Littlewood's $4/3$
inequality. Three years later, using quite delicate estimates, Hardy and
Littlewood \cite{hl} extended Littlewood's $4/3$ inequality to bilinear
forms defined on $\ell _{p}\times \ell _{q}.$ In 1981, Praciano-Pereira \cite%
{pra} extended the Hardy--Littlewood inequalities to $m$-linear forms on $%
\ell _{p}$ spaces for $p\geq 2m$ and quite recently Dimant and Sevilla-Peris
\cite{dimant} extended the estimates for the case $m<p\leq 2m$. These
results were extensively investigated in various directions in the recent
years (\cite{jfa-abps, isr-abps, ap2, ap, aaps, cap, dimant, nnn}). As a
matter of fact, all results hold for both real and complex scalars with
eventually different constants; from now on we denote $\mathbb{K}=\mathbb{R}$
or $\mathbb{C}$. In general terms we have the following $m$-linear
inequalities:

\begin{itemize}
\item If $p\geq 2m,$ then there are constants $B_{m,p}^{\mathbb{K}}\geq 1$
such that%
\begin{equation*}
\left( \sum_{j_{1},\cdots ,j_{m}=1}^{n}\left\vert T(e_{j_{1}},\cdots
,e_{j_{m}})\right\vert ^{\frac{2mp}{mp+p-2m}}\right) ^{\frac{mp+p-2m}{2mp}%
}\leq B_{m,p}^{\mathbb{K}}\left\Vert T\right\Vert
\end{equation*}%
for all $m$-linear forms $T:\ell _{p}^{n}\times \cdots \times \ell
_{p}^{n}\rightarrow \mathbb{K}$ and all positive integers $n$.

\item If $m<p \leq 2m$, then there are constants $B_{m,p}^{\mathbb{K}}\geq 1$
such that
\begin{equation*}
\left( \sum_{j_{1},...,j_{m}=1}^{n}\left\vert T\left( e_{j_{1}},\cdots
,e_{j_{m}}\right) \right\vert ^{\frac{p}{p-m}}\right) ^{\frac{p-m}{p}}\leq
B_{m,p}^{\mathbb{K}}\left\Vert T\right\Vert
\end{equation*}%
for all $m$-linear forms $T:\ell _{p}^{n}\times \cdots \times \ell
_{p}^{n}\rightarrow \mathbb{K}$ and all positive integers $n$.
\end{itemize}

The exponents of all above inequalities are optimal: if replaced by smaller
exponents the constants will depend on $n$. However, looking at the above
inequalities by an anisotropic viewpoint a much richer complexity arise
(see, for instance, \cite{jfa-abps, isr-abps, ap2, ap, cap, rp}).

The investigation of the sharp constants in above inequalities is more than
a puzzling mathematical challenge; for applications in physics we refer to
\cite{montanaro}. The first estimates for $B_{m,p}^{\mathbb{K}}$ had
exponential growth:
\begin{equation*}
B_{m,p}^{\mathbb{K}}\leq \left( \sqrt{2}\right) ^{m-1},
\end{equation*}%
for any $m\geq 1$. It was just quite recently that the estimates for $%
B_{m,p}^{\mathbb{K}}$ were refined, see for instance \cite{ap2, ap, bayart}
and references therein. It was proved in \cite{bayart} that
\begin{eqnarray}
B_{m,\infty }^{\mathbb{R}} &<&\kappa _{1}\cdot m^{\frac{2-\log 2-\gamma }{2}%
}\approx \kappa _{1}\cdot m^{0.36482}, \\
B_{m,\infty }^{\mathbb{C}} &<&\kappa _{2}\cdot m^{\frac{1-\gamma }{2}%
}\approx \kappa _{2}\cdot m^{0.21139},
\end{eqnarray}%
for certain constants $\kappa _{1},\kappa _{2}>0,$ where $\gamma $ is the
Euler-Mascheroni constant. For $p<\infty $, among other results it was shown
in \cite{ap} that for $p>2m(m-1)^{2}$ we have
\begin{equation*}
B_{m,p}^{\mathbb{K}}\leq B_{m,\infty }^{\mathbb{K}}.
\end{equation*}%
The best known estimates of $B_{m,p}^{\mathbb{K}}$ for the case $m<p\leq 2m$
are $\left( \sqrt{2}\right) ^{m-1}$ (see \cite{isr-abps, dimant}). These
estimates (case $m<p\leq 2m$) are somewhat intriguing. In fact, if $p=m$ it
is easy to show that the only Hardy--Littlewood type inequality
\begin{equation*}
\left( \sum_{j_{1},\cdots ,j_{m}=1}^{n}\left\vert T(e_{j_{1}},\cdots
,e_{j_{m}})\right\vert ^{s}\right) ^{\frac{1}{s}}\leq B_{m,m}^{\mathbb{K}%
}\left\Vert T\right\Vert
\end{equation*}%
happens for $s=\infty $ (of course, here we consider the $\sup $ norm) and
in this case it is obvious that the optimal constants are $B_{m,m}^{\mathbb{K%
}}=1.$ So, we have optimal constants equal to $1$ for $p=m$ and the best
known constants $\left( \sqrt{2}\right) ^{m-1}$ for $p$ close to $m.$ In
this paper, among other results, we show that in fact the estimates $\left(
\sqrt{2}\right) ^{m-1}$ are far from being optimal: we prove that%
\begin{equation*}
B_{m,p}^{\mathbb{K}}\leq 2^{\frac{\left( m-1\right) \left( p-m\right) }{p}}.
\end{equation*}

We present below the estimate obtained by Dimant and Sevilla-Peris (\cite%
{dimant}) for further reference:

\begin{theorem}[Dimant and Sevilla-Peris]
\label{i99}Let $m\geq 2$ be a positive integer and \ $p_{j}>1$ for all $j$
and%
\begin{equation*}
\frac{1}{2}\leq \frac{1}{p_{1}}+\cdots +\frac{1}{p_{m}}<1.
\end{equation*}%
Then%
\begin{equation*}
\left( \sum_{j_{1},...,j_{m}=1}^{n}\left\vert T\left( e_{j_{1}},\ldots
,e_{j_{m}}\right) \right\vert ^{\frac{1}{1-\left( \frac{1}{p_{1}}+\cdots +%
\frac{1}{p_{m}}\right) }}\right) ^{1-\left( \frac{1}{p_{1}}+\cdots +\frac{1}{%
p_{m}}\right) }\leq \left( \sqrt{2}\right) ^{m-1}\left\Vert T\right\Vert ,
\end{equation*}%
for all $m$-linear forms $T:\ell _{p_{1}}^{n}\times \cdots \times \ell
_{p_{m}}^{n}\rightarrow \mathbb{K}$ and all positive integers $n$. In
particular, if $m<p\leq 2m$, then,
\begin{equation*}
\left( \sum_{j_{1},...,j_{m}=1}^{n}\left\vert T\left( e_{j_{1}},\ldots
,e_{j_{m}}\right) \right\vert ^{\frac{p}{p-m}}\right) ^{\frac{p-m}{p}}\leq
\left( \sqrt{2}\right) ^{m-1}\left\Vert T\right\Vert
\end{equation*}%
for all continuous $m$-linear forms $T:\ell _{p}\times \cdots \times \ell
_{p}\rightarrow \mathbb{K}$.
\end{theorem}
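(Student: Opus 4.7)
The plan is to proceed by induction on $m$, using Khintchine's inequality to generate the factor $\sqrt{2}$ at each step and Holder's inequality to recombine the resulting partially mixed estimates into the fully symmetric one. The base case $m = 1$ (forced into $p_1 \in (1,2]$ by the hypothesis $1/2 \le 1/p_1 < 1$) is merely the identification $(\ell_{p_1})^{\ast} \simeq \ell_{p_1'}$ with constant $1$.

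For the inductive step, fix $T\colon \ell_{p_1}^n \times \cdots \times \ell_{p_m}^n \to \mathbb{K}$, write $\sigma = \sum_{j=1}^m 1/p_j$, and set $r = 1/(1 - \sigma)$, so that $r \in [2, \infty)$ by the hypothesis $1/2 \le \sigma < 1$. First I would establish, for each $k = 1, \dots, m$, a \emph{partially symmetrized} Hardy--Littlewood inequality of the form
\begin{equation*}
\left( \sum_{j_k = 1}^n \left( \sum_{j_i : \, i \ne k} \bigl| T(e_{j_1}, \ldots, e_{j_m}) \bigr|^2 \right)^{q_k/2} \right)^{1/q_k} \le (\sqrt{2})^{m-1} \|T\|,
\end{equation*}
where the outer exponent $q_k$ is anisotropic and determined by $p_k$. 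This estimate is produced by $m - 1$ iterated applications of Khintchine's inequality (with Steinhaus variables when $\mathbb{K} = \mathbb{C}$) in the coordinates other than $k$. Each application replaces a scalar sum $(\sum |a_j|^2)^{1/2}$ by $\sqrt{2}$ times an expected value of a random linear combination, after which the multilinearity of $T$ together with the $\ell_{p_i}$-norm of the random sign vector reduces the problem to an $(m-1)$-linear one on which the inductive hypothesis applies.

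I would then combine the $m$ partially symmetrized inequalities by a generalized Holder argument. Writing $\bigl( \sum |T(e)|^r \bigr)^m$ as a product of $m$ copies and distributing exponents across the $m$ different $(q_k, 2)$-mixed-norm expressions above, one recovers the fully symmetric $\ell_r$-sum on the left-hand side while retaining the constant $(\sqrt{2})^{m-1}$ on the right. A standard passage to the limit $n \to \infty$ then yields the stated infinite-dimensional version.

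The main obstacle is the exponent bookkeeping required to make the Holder recombination produce exactly $r$ on every coordinate and to ensure that the $m - 1$ Khintchine factors compound to exactly $(\sqrt{2})^{m-1}$ rather than a larger power. The hypothesis $\sigma \ge 1/2$ (equivalently $r \ge 2$) is precisely what makes this arithmetic matching possible, since it guarantees that the inner $\ell_2$-norms produced by Khintchine dominate the target $\ell_r$-norms on finite sequences; the hypothesis $\sigma < 1$ ensures that $r$ is finite in the first place.
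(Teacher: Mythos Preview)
The paper does not prove this theorem; it is quoted from Dimant and Sevilla-Peris \cite{dimant} as background. What the paper does prove is the sharper Theorem~\ref{76543}, and its method is structurally different from yours: one first establishes a mixed $(\ell_1,\ell_s)$ estimate for forms on $\ell_\infty^n\times\cdots\times\ell_\infty^n$ (where the Rademacher trick is legitimate because sign vectors have $\ell_\infty$-norm $1$), and then invokes the transfer Lemma~\ref{geral}(a) to pass from $\ell_\infty$ to $\ell_{p_j}$. The original Dimant--Sevilla-Peris argument is of the same two-stage type.

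Your proposal, by contrast, runs the Khintchine step \emph{directly in $\ell_{p_i}$}, and this is where it breaks. When you replace $(\sum_j|a_j|^2)^{1/2}$ by $\sqrt{2}\,\mathbb{E}\bigl|\sum_j \varepsilon_j a_j\bigr|$ and then use multilinearity to write the inner sum as $T(\ldots,\sum_j\varepsilon_j e_j,\ldots)$, the vector $\sum_{j=1}^n \varepsilon_j e_j$ has $\ell_{p_i}$-norm equal to $n^{1/p_i}$, not $1$. So the ``reduction to an $(m-1)$-linear form'' picks up an $n$-dependent factor that no choice of the anisotropic exponent $q_k$ will cancel; the whole point of the transfer lemma is to avoid this by working on $\ell_\infty$ first and then carefully moving one coordinate at a time to $\ell_{p_j}$ via H\"older-type duality with the auxiliary operator $T^{(x)}$. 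A second, smaller gap: your inductive hypothesis requires $\tfrac12\le\sum_{j\ne k}1/p_j<1$, but removing one $1/p_k$ from $\sigma\ge\tfrac12$ can easily make the remaining sum drop below $\tfrac12$, so the induction as stated does not close.
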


The exponent $\frac{1}{1-\left( \frac{1}{p_{1}}+\cdots +\frac{1}{p_{m}}%
\right) }$ is optimal, but if one works in the anisotropic setting the
result is not optimal (see, for instance, \cite{aaps, rp}). The main results
of the present paper are the forthcoming Theorems \ref{76543}, \ref{765432}
and \ref{7654321} which also improve the original constants of the bilinear
Hardy--Littlewood inequalities. For instance, for $m<p\leq m+1$ the optimal
constants of the Hardy--Littlewood inequality are uniformly bounded by $2.$

\section{A multipurpose lemma}

Let $m\geq 2$ be a positive integer, $F$ be a Banach space, $A\subset
I_{m}:=\{1,\dots ,m\},p_{1},\dots ,p_{m},s,\alpha \geq 1$ and
\begin{equation*}
B_{p_{1},\dots ,p_{m}}^{A,s,\alpha ,F,n} :=\inf \left\{ C(n)\geq 0 \,: \
\left( \sum_{j_{i}=1}^{n} \left( \sum_{\widehat{j_{i}}=1}^{n} \left|
T\left(e_{j_{1}},\dots ,e_{j_{m}}\right) \right|^{s} \right)^{\frac{1}{s}%
\alpha} \right) ^{\frac{1}{\alpha }} \leq C(n), \text{ for all } i\in A
\right\},
\end{equation*}
in which $\widehat{j_{i}}$ means that the sum runs over all indexes but $%
j_{i}$, and the infimum is taken over all norm-one $m$-linear operators $%
T:\ell _{p_{1}}^{n}\times \cdots \times \ell_{p_{m}}^{n} \to F$. The
following lemma -- fundamental in the proof of our main results -- is based on ideas dating back to Hardy and Littlewood (see
\cite{hl} and \cite{pra}), and we believe that it is of independent interest:

\begin{lemma}
\label{geral} Let $1\leq p_{k}<q_{k}\leq \infty ,\,k=1,\dots ,m$ and $%
\lambda _{0},s\geq 1$.

(a) If
\begin{equation}
\sum_{j=1}^{m}\left( \frac{1}{p_{j}}-\frac{1}{q_{j}}\right) <\frac{1}{%
\lambda _{0}}\quad \text{ and }\quad s\geq \left[ \frac{1}{\lambda _{0}}%
-\sum_{j=1}^{m}\left( \frac{1}{p_{j}}-\frac{1}{q_{j}}\right) \right]
^{-1}=:\eta _{1},  \label{hipinica}
\end{equation}%
then
\begin{equation*}
B_{p_{1},\dots ,p_{m}}^{I_{m},s,\eta _{1},F,n}\leq B_{q_{1},\dots
,q_{m}}^{I_{m},s,\lambda _{0},F,n}.
\end{equation*}%
\noindent (b) If%
\begin{equation}
\sum_{j=1}^{m}\left( \frac{1}{p_{j}}-\frac{1}{q_{j}}\right) <\frac{1}{%
\lambda _{0}}\quad \text{ and }\quad s\geq \left[ \frac{1}{\lambda _{0}}%
-\sum_{j=1}^{m-1}\left( \frac{1}{p_{j}}-\frac{1}{q_{j}}\right) \right]
^{-1}=:\eta _{2}  \label{hipinicb}
\end{equation}%
then
\begin{equation*}
B_{p_{1},\dots ,p_{m}}^{\{m\},s,\eta _{2},F,n}\leq B_{q_{1},\dots
,q_{m}}^{I_{m},s,\lambda _{0},F,n}.
\end{equation*}
\end{lemma}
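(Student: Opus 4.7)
The plan is to invoke the classical Hardy--Littlewood ``diagonal weight'' trick that allows one to transfer multilinear estimates between $\ell_{p}$-scales: deform $T$ by composing each coordinate with a diagonal multiplier of prescribed $\ell_{r_{k}}$-norm, apply the hypothesised $q$-inequality to the deformed operator, and then eliminate the weights by iterated duality.

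\textbf{Step 1 (Weighted reduction).} Fix a norm-one $m$-linear $T\colon\ell_{p_{1}}^{n}\times\cdots\times\ell_{p_{m}}^{n}\to F$. Define $r_{k}\in[1,\infty]$ by $1/r_{k}:=1/p_{k}-1/q_{k}$, so that $\sum_{k=1}^{m}1/r_{k}=\sum_{k=1}^{m}(1/p_{k}-1/q_{k})$ is exactly the quantity appearing in the hypotheses. For any nonnegative sequences $\alpha^{(k)}\in\mathbb{K}^{n}$ with $\|\alpha^{(k)}\|_{r_{k}}\leq 1$, the diagonal operator $D_{k}x:=(\alpha^{(k)}_{j}x_{j})_{j}$ satisfies $\|D_{k}\colon\ell_{q_{k}}^{n}\to\ell_{p_{k}}^{n}\|\leq 1$ by H\"older. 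Hence $S:=T\circ(D_{1},\ldots,D_{m})\colon\ell_{q_{1}}^{n}\times\cdots\times\ell_{q_{m}}^{n}\to F$ has norm at most $1$.

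\textbf{Step 2 (Apply the $q$-inequality).} For each $i$ in the relevant index set (all of $I_{m}$ in (a); only $i=m$ in (b)), the definition of $B_{q_{1},\ldots,q_{m}}^{I_{m},s,\lambda_{0},F,n}$ applied to $S$, together with the identity $S(e_{j_{1}},\ldots,e_{j_{m}})=\bigl(\prod_{k}\alpha^{(k)}_{j_{k}}\bigr)T(e_{j_{1}},\ldots,e_{j_{m}})$, gives the weighted estimate
\begin{equation*}
\left(\sum_{j_{i}}|\alpha^{(i)}_{j_{i}}|^{\lambda_{0}}\left(\sum_{\widehat{j_{i}}}\prod_{k\neq i}|\alpha^{(k)}_{j_{k}}|^{s}\,|T(e_{j_{1}},\ldots,e_{j_{m}})|^{s}\right)^{\lambda_{0}/s}\right)^{1/\lambda_{0}}\leq B_{q_{1},\ldots,q_{m}}^{I_{m},s,\lambda_{0},F,n}.
\end{equation*}

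\textbf{Step 3 (Remove the weights).} I eliminate the $\alpha^{(k)}$ by sequential duality: for each $k\neq i$, dualising the $\ell^{r_{k}/s}$-norm in the $j_{k}$-variable converts $\sum_{j_{k}}|\alpha^{(k)}_{j_{k}}|^{s}(\cdots)$ into an $\ell^{r_{k}/(r_{k}-s)}$-norm in $j_{k}$ of $(\cdots)$; for $k=i$, dualising $\ell^{r_{i}/\lambda_{0}}$ in the outer variable does the same with exponent $\lambda_{0}$. In case (a) one takes the supremum over all $\|\alpha^{(k)}\|_{r_{k}}\leq 1$ ($k\in I_{m}$): the compounded exponents, via the identity $1/\eta_{1}=1/\lambda_{0}-\sum_{k=1}^{m}1/r_{k}$, collapse exactly into the target bound
\begin{equation*}
\left(\sum_{j_{i}}\left(\sum_{\widehat{j_{i}}}|T(e_{j_{1}},\ldots,e_{j_{m}})|^{s}\right)^{\eta_{1}/s}\right)^{1/\eta_{1}}\leq B_{q_{1},\ldots,q_{m}}^{I_{m},s,\lambda_{0},F,n}.
\end{equation*}
In case (b), one only takes suprema for $k<m$ and fixes $\alpha^{(m)}$ as the extremal weight proportional to $(\sum_{\widehat{j_{m}}}|T|^{s})^{(\eta_{2}-\lambda_{0})/(s\lambda_{0})}$ (normalised so that $\|\alpha^{(m)}\|_{r_{m}}=1$); the identity $1/\eta_{2}=1/\lambda_{0}-\sum_{k=1}^{m-1}1/r_{k}$ then yields the $\eta_{2}$-bound with outer sum only over $j_{m}$.

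\textbf{Main obstacle.} The technical heart is verifying that the iterated dualisations of Step~3 really do collapse to a single $\ell^{\eta}$-norm with the stated exponent, and that the hypotheses $s\geq\eta_{1}$ (resp.\ $s\geq\eta_{2}$) are precisely what is needed to keep every intermediate dual exponent $\geq 1$ so that all H\"older/duality applications are legitimate. The proof reduces to careful bookkeeping of exponents rather than any novel idea, but the correct sequencing of the suprema (and, in case (b), the explicit choice of the non-dualised weight $\alpha^{(m)}$) must be arranged so that the nested norms fuse without cross-interference between the $m-1$ or $m$ different H\"older steps.
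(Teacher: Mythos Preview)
Your Steps 1 and 2 are exactly the diagonal--weight construction the paper uses, and the overall plan is the right one. The gap is in Step 3. You propose to remove the weights $\alpha^{(k)}$ for $k\neq i$ by ``dualising the $\ell^{r_{k}/s}$-norm in the $j_{k}$-variable,'' but this cannot be done pointwise: the weight $\alpha^{(k)}_{j_{k}}$ sits inside the inner $\ell^{s}$-sum, which is itself inside the outer $\ell^{\lambda_{0}}$-sum over $j_{i}$, and the supremum over $\alpha^{(k)}$ must be taken \emph{uniformly} in $j_{i}$. Writing the inner sum as $\sum_{j_{k}}|\alpha^{(k)}_{j_{k}}|^{s}\,c_{j_{k}}(j_{i},\dots)$ and optimising for each $j_{i}$ separately would produce a $j_{i}$-dependent extremiser, which is illegitimate. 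Dualising the outer weight $\alpha^{(i)}$ first does not help either: it moves the outer exponent from $\lambda_{0}$ to $(\,1/\lambda_{0}-1/r_{i}\,)^{-1}$, not to $\eta_{1}$, and the remaining inner weights still face the same uniformity obstruction. So ``the nested norms fuse without cross-interference'' is precisely what does \emph{not} happen, and this is not bookkeeping---it is the whole difficulty. The same objection applies verbatim to your treatment of (b): the weights $\alpha^{(k)}$ for $k<m$ are inside the $j_{m}$-sum.

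The paper circumvents this by transferring one coordinate at a time. At stage $k$ it introduces only the single weight $x\in B_{\ell^{n}_{r_{k}}}$ (your $r_{k}$), applies the already--established inequality on $\ell_{p_{1}}^{n}\times\cdots\times\ell_{p_{k-1}}^{n}\times\ell_{q_{k}}^{n}\times\cdots\times\ell_{q_{m}}^{n}$ to $T^{(x)}$, and upgrades the outer exponent from $\lambda_{k-1}$ to $\lambda_{k}$. For $i=k$ this is indeed a one-line duality, exactly as you describe. For $i\neq k$, however, the paper needs a genuine device: setting $S_{i}=\bigl(\sum_{\widehat{j_{i}}}|T|^{s}\bigr)^{1/s}$, it writes $\sum_{j_{i}}S_{i}^{\lambda_{k}}=\sum_{j_{k}}\sum_{\widehat{j_{k}}}|T|^{s}/S_{i}^{s-\lambda_{k}}$, then applies H\"older twice with exponents chosen so that the expression splits into one factor controlled by the just-proved $i=k$ case and another factor to which the induction hypothesis (for the same $i$) applies after reinserting the weight. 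This $S_{i}$-manipulation is the substantive content of the lemma and is exactly the mechanism your proposal is missing.
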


\begin{proof}
To prove (a), let $s,\lambda _{0}$ be such that \eqref{hipinica} is
fulfilled. Let us define
\begin{equation*}
\lambda _{j}:=\left[ \frac{1}{\lambda _{0}}-\sum_{i=1}^{j}\left( \frac{1}{%
p_{i}}-\frac{1}{q_{i}}\right) \right] ^{-1},\quad j=1,\dots ,m.
\end{equation*}%
Notice that $\lambda _{m}=\eta _{1}$,
\begin{equation*}
\lambda _{j-1}<\lambda _{j}\quad \text{ and }\quad \left[ \frac{q_{j}p_{j}}{%
\lambda _{j-1}(q_{j}-p_{j})}\right] ^{\ast }=\frac{\lambda _{j}}{\lambda
_{j-1}},\quad \text{ for all }j=1,\dots ,m.
\end{equation*}%
Let us suppose that, for $k\in \{1,\dots ,m\}$,
\begin{equation}
\left( \sum_{j_{i}=1}^{n}\left( \sum_{\widehat{j_{i}}=1}^{n}\left\vert
T(e_{j_{1}},...,e_{j_{m}})\right\vert ^{s}\right) ^{\frac{1}{s}\lambda
_{k-1}}\right) ^{\frac{1}{\lambda _{k-1}}}\leq B_{q_{1},\dots
,q_{m}}^{I_{m},s,\lambda _{0},F,n}\Vert T\Vert   \label{Hipind1}
\end{equation}%
is true for all continuous $m$--linear operators $T:\ell _{p_{1}}^{n}\times
\cdots \times \ell _{p_{k-1}}^{n}\times \ell _{q_{k}}^{n}\times \cdots
\times \ell _{q_{m}}^{n}\rightarrow F$ and for all $i=1,...,m$. Let us prove
that
\begin{equation}
\left( \sum_{j_{i}=1}^{n}\left( \sum_{\widehat{j_{i}}=1}^{n}\left\vert
T(e_{j_{1}},...,e_{j_{m}})\right\vert ^{s}\right) ^{\frac{1}{s}\lambda
_{k}}\right) ^{\frac{1}{\lambda _{k}}}\leq B_{q_{1},\dots
,q_{m}}^{I_{m},s,\lambda _{0},F,n}\Vert T\Vert   \label{destese}
\end{equation}%
for all continuous $m$--linear operators $T:\ell _{p_{1}}^{n}\times \cdots
\times \ell _{p_{k}}^{n}\times \ell _{q_{k+1}}^{n}\times \cdots \times \ell
_{q_{m}}^{n}\rightarrow F$ and for all $i=1,...,m$. The first induction step
is our hypothesis. Consider
\begin{equation*}
T:\ell _{p_{1}}^{n}\times \dots \times \ell _{p_{k}}^{n}\times \ell
_{q_{k+1}}^{n}\times \dots \times \ell _{q_{m}}^{n}\rightarrow F,
\end{equation*}%
a $m$-linear operator and, for each $x\in B_{\ell _{\frac{q_{k}p_{k}}{%
q_{k}-p_{k}}}^{n}}$ define
\begin{equation*}
\begin{array}{ccccl}
T^{(x)} & : & \ell _{p_{1}}^{n}\times \cdots \times \ell
_{p_{k-1}}^{n}\times \ell _{q_{k}}^{n}\times \cdots \times \ell _{q_{m}}^{n}
& \rightarrow  & F \\
&  & (z^{(1)},\dots ,z^{(m)}) & \mapsto  & T(z^{(1)},\dots
,z^{(k-1)},xz^{(k)},z^{(k+1)},\dots ,z^{(m)}),%
\end{array}%
\end{equation*}%
with $xz^{(k)}=(x_{j}z_{j}^{(k)})_{j=1}^{n}\in \ell _{p_{k}}^{n}$. Observe
that
\begin{equation*}
\Vert T\Vert \geq \sup \left\{ \Vert T^{(x)}\Vert :x\in B_{\ell _{\frac{%
q_{k}p_{k}}{q_{k}-p_{k}}}^{n}}\right\} .
\end{equation*}%
By applying the induction hypothesis to $T^{(x)}$, we obtain
\begin{align}
& \left( \sum_{j_{i}=1}^{n}\left( \sum_{\widehat{j_{i}}=1}^{n}\left\vert
T\left( e_{j_{1}},...,e_{j_{m}}\right) \right\vert ^{s}\left\vert
x_{j_{k}}\right\vert ^{s}\right) ^{\frac{1}{s}\lambda _{k-1}}\right) ^{\frac{%
1}{\lambda _{k-1}}}  \notag \\
& =\left( \sum_{j_{i}=1}^{n}\left( \sum_{\widehat{j_{i}}=1}^{n}\left\vert
T\left(
e_{j_{1}},...,e_{j_{k-1}},xe_{j_{k}},e_{j_{k+1}},...,e_{j_{m}}\right)
\right\vert ^{s}\right) ^{\frac{1}{s}\lambda _{k-1}}\right) ^{\frac{1}{%
\lambda _{k-1}}}  \notag \\
& =\left( \sum_{j_{i}=1}^{n}\left( \sum_{\widehat{j_{i}}=1}^{n}\left\vert
T^{(x)}\left( e_{j_{1}},...,e_{j_{m}}\right) \right\vert ^{s}\right) ^{\frac{%
1}{s}\lambda _{k-1}}\right) ^{\frac{1}{\lambda _{k-1}}}  \notag \\
& \leq B_{q_{1},\dots ,q_{m}}^{I_{m},s,\lambda _{0},F,n}\Vert T^{(x)}\Vert
\notag \\
& \leq B_{q_{1},\dots ,q_{m}}^{I_{m},s,\lambda _{0},F,n}\Vert T\Vert
\label{guga0301}
\end{align}%
for all $i=1,...,m.$

Since
\begin{equation*}
\left[\frac{q_jp_j}{\lambda_{j-1}(q_j-p_j)}\right]^*= \frac{\lambda_j}{%
\lambda_{j-1}},
\end{equation*}
for all $j=1,\dots, m$, we have
\begin{align*}
& \left( \sum_{j_{k}=1}^{n}\left( \sum_{\widehat{j_{k}}=1}^{n}\left\vert
T\left( e_{j_{1}},...,e_{j_{m}}\right) \right\vert^{s}\right) ^{\frac{1}{s}%
\lambda_{k}}\right) ^{\frac{1}{\lambda_{k}}} \\
& = \left( \sum_{j_{k}=1}^{n}\left( \sum_{\widehat{j_{k}}=1}^{n}\left\vert
T\left(e_{j_{1}},...,e_{j_{m}}\right)\right\vert^{s}\right)^{\frac{1}{s}%
\lambda_{k-1}\left[\frac{q_kp_k}{\lambda_{k-1}(q_k-p_k)}\right]^*}\right)^{%
\frac{1}{\lambda_{k-1}}\cdot\frac{1}{\left[\frac{q_kp_k}{%
\lambda_{k-1}(q_k-p_k)}\right]^*}} \\
& =\left\Vert \left( \left( \sum_{\widehat{j_{k}}=1}^{n}\left\vert T\left(
e_{j_{1}},...,e_{j_{m}}\right) \right\vert^{s}\right) ^{\frac{1}{s}%
\lambda_{k-1}}\right) _{j_{k}=1}^{n}\right\Vert_{\left[\frac{q_kp_k}{%
\lambda_{k-1}(q_k-p_k)}\right]^*}^{\frac{1}{\lambda_{k-1}}} \\
& =\left( \sup_{y\in B_{\ell_{\frac{q_kp_k}{\lambda_{k-1}(q_k-p_k)}%
}^{n}}}\sum_{j_{k}=1}^{n}|y_{j_{k}}|\left( \sum_{\widehat{j_{k}}%
=1}^{n}\left\vert T\left( e_{j_{1}},...,e_{j_{m}}\right)
\right\vert^{s}\right)^{\frac{1}{s}\lambda_{k-1}}\right) ^{\frac{1}{%
\lambda_{k-1}}} \\
& =\left( \sup_{x\in B_{\ell_{\frac{q_kp_k}{q_k-p_k}}^{n}}}%
\sum_{j_{k}=1}^{n}|x_{j_{k}}|^{\lambda_{k-1}}\left( \sum_{\widehat{j_{k}}%
=1}^{n}\left\vert T\left(e_{j_{1}},...,e_{j_{m}}\right)
\right\vert^{s}\right)^{\frac{1}{s}\lambda_{k-1}}\right) ^{\frac{1}{%
\lambda_{k-1}}} \\
& = \sup_{x\in B_{\ell_{\frac{q_kp_k}{q_k-p_k}}^{n}}}\left(%
\sum_{j_{k}=1}^{n}\left(\sum_{\widehat{j_{k}}=1}^{n}\left\vert
T\left(e_{j_{1}},...,e_{j_{m}}\right)\right\vert^{s}\left\vert
x_{j_{k}}\right\vert^{s}\right)^{\frac{1}{s}\lambda_{k-1}}\right)^{\frac{1}{%
\lambda_{k-1}}} \\
& \leq B_{q_{1},\dots,q_{m}}^{I_{m},s,\lambda _{0},F,n} \Vert T\Vert.
\end{align*}
This proves \eqref{destese} for $i=k$. To prove \eqref{destese} for $i \neq
k $ let us consider initially $k\neq m$. Define
\begin{equation*}
S_{i}=\left(\sum_{\widehat{j_{i}}=1}^{n}|T(e_{j_{1}},...,e_{j_{m}})|^{s}%
\right)^{\frac{1}{s}}, \qquad i=1,....,m.
\end{equation*}

Note that
\begin{align*}
\sum_{j_{i}=1}^{n}&\left(\sum_{\widehat{j_{i}}%
=1}^{n}|T(e_{j_{1}},...,e_{j_{m}})|^{s}\right)^{\frac{1}{s}\lambda_{k}} \\
&
=\sum_{j_{i}=1}^{n}S_{i}^{\lambda_{k}}=\sum_{j_{i}=1}^{n}S_{i}^{%
\lambda_{k}-s}S_{i}^{s} \\
& = \sum_{j_{i}=1}^{n}\sum_{\widehat{j_{i}}=1}^{n}\frac{%
|T(e_{j_{1}},...,e_{j_{m}})|^{s}}{S_{i}^{s-\lambda_{k}}} \\
& = \sum_{j_{k}=1}^{n}\sum_{\widehat{j_{k}}=1}^{n}\frac{%
|T(e_{j_{1}},...,e_{j_{m}})|^{s}}{S_{i}^{s-\lambda_{k}}} \\
& = \sum_{j_{k}=1}^{n}\sum_{\widehat{j_{k}}=1}^{n}\frac{%
|T(e_{j_{1}},...,e_{j_{m}})|^{\frac{s(s-\lambda_{k})}{s-\lambda_{k-1}}}}{%
S_{i}^{s-\lambda_{k}}}|T(e_{j_{1}},...,e_{j_{m}})|^{\frac{%
s(\lambda_{k}-\lambda_{k-1})}{s-\lambda_{k-1}}}.
\end{align*}

From H\"{o}lder's inequality (first with exponents $r=\frac{s-\lambda _{k-1}%
}{s-\lambda _{k}}$ and $r^{\ast }=\frac{s-\lambda _{k-1}}{\lambda
_{k}-\lambda _{k-1}}$ and then with exponents $r=\frac{\lambda _{k}\left(
s-\lambda _{k-1}\right) }{\lambda_{k-1}\left( s-\lambda _{k}\right) }$ and $%
r^{\ast }=\frac{\lambda _{k}\left(s-\lambda _{k-1}\right) }{s\left(
\lambda_{k}-\lambda _{k-1}\right) }$), we have
\begin{align}
& \sum_{j_{i}=1}^{n}\left( \sum_{\widehat{j_{i}}%
=1}^{n}|T(e_{j_{1}},...,e_{j_{m}})|^{s}\right) ^{\frac{1}{s}\lambda_{k}}
\notag \\
& = \sum_{j_{k}=1}^{n}\sum_{\widehat{j_{k}}=1}^{n}\frac{%
|T(e_{j_{1}},...,e_{j_{m}})|^{\frac{s(s-\lambda_{k})}{s-\lambda_{k-1}}}}{%
S_{i}^{s-\lambda_{k}}}|T(e_{j_{1}},...,e_{j_{m}})|^{\frac{%
s(\lambda_{k}-\lambda_{k-1})}{s-\lambda_{k-1}}}  \notag \\
& \leq \sum_{j_{k}=1}^{n}\left( \sum_{\widehat{j_{k}}=1}^{n} \frac{%
|T(e_{j_{1}},...,e_{j_{m}})|^{s}}{S_{i}^{s-\lambda_{k-1}}}\right)^{\frac{%
s-\lambda_{k}}{s-\lambda_{k-1}}}\left( \sum_{\widehat{j_{k}}%
=1}^{n}|T(e_{j_{1}},...,e_{j_{m}})|^{s}\right) ^{\frac{\lambda_{k}-%
\lambda_{k-1}}{s-\lambda_{k-1}}}  \notag \\
& \leq\left( \sum_{j_{k}=1}^{n}\left( \sum_{\widehat{j_{k}}=1}^{n}\frac{%
|T(e_{j_{1}},...,e_{j_{m}})|^{s}}{S_{i}^{s-\lambda_{k-1}}}\right)^{\frac{%
\lambda_{k}}{\lambda_{k-1}}}\right)^{\frac{\lambda_{k-1}}{\lambda_{k}}\cdot%
\frac{s-\lambda_{k}}{s-\lambda_{k-1}}}  \notag \\
& \qquad \times \left( \sum_{j_{k}=1}^{n}\left( \sum_{\widehat{j_{k}}%
=1}^{n}|T(e_{j_{1}},...,e_{j_{m}})|^{s}\right) ^{\frac{1}{s}%
\lambda_{k}}\right) ^{\frac{1}{\lambda_{k}}\cdot\frac{(\lambda_{k}-%
\lambda_{k-1})s}{s-\lambda_{k-1}}}  \label{ant}
\end{align}
Let us estimate separately the two factors of this product. It follows from
the case $i=k$ that
\begin{equation}  \label{huhi}
\left( \sum_{j_{k}=1}^{n}\left( \sum_{\widehat{j_{k}}%
=1}^{n}|T(e_{j_{1}},...,e_{j_{m}})|^{s}\right)^{\frac{1}{s}%
\lambda_{k}}\right)^{\frac{1}{\lambda_{k}}\cdot\frac{(\lambda_{k}-%
\lambda_{k-1})s}{s-\lambda_{k-1}}}\leq\left(B_{q_{1},\dots,q_{m}}^{I_{m},s,%
\lambda _{0},F,n}\Vert T\Vert\right) ^{\frac{(\lambda_{k}-\lambda_{k-1})s}{%
s-\lambda_{k-1}}}.
\end{equation}
For the first factor, from H\"{o}lder's inequality with exponents $r=\frac{s%
}{s-\lambda _{k-1}}$ and $r^*=\frac{s}{\lambda _{k-1}}$ and the induction
hypothesis, we get
\begin{align*}
&\left( \sum_{j_{k}=1}^{n}\left( \sum_{\widehat{j_{k}}=1}^{n}\frac{%
|T(e_{j_{1}},...,e_{j_{m}})|^{s}}{S_{i}^{s-\lambda_{k-1}}}\right)^{\frac{%
\lambda_{k}}{\lambda_{k-1}}}\right)^{\frac{\lambda_{k-1}}{\lambda_{k}}} \\
& = \left\Vert \left( \sum_{\widehat{j_{k}}}\frac{%
|T(e_{j_{1}},...,e_{j_{m}})|^{s}}{S_{i}^{s-\lambda_{k-1}}}%
\right)_{j_{k}=1}^{n}\right\Vert_{\left[\frac{q_kp_k}{\lambda_{k-1}(q_k-p_k)}%
\right]^*} \\
& =\sup_{y\in B_{\ell_{\frac{q_kp_k}{\lambda_{k-1}(q_k-p_k)}%
}^{n}}}\sum_{j_{k}=1}^{n}|y_{j_{k}}|\sum_{\widehat{j_{k}}=1}^{n}\frac{%
|T(e_{j_{1}},...,e_{j_{m}})|^{s}}{S_{i}^{s-\lambda_{k-1}}} \\
& =\sup_{x\in B_{\ell_{\frac{q_kp_k}{q_k-p_k}}^{n}}}%
\sum_{j_{k}=1}^{n}|x_{j_{k}}|^{\lambda_{k-1}}\sum_{\widehat{j_{k}}=1}^{n}%
\frac{|T(e_{j_{1}},...,e_{j_{m}})|^{s}}{S_{i}^{s-\lambda_{k-1}}} \\
& =\sup_{x\in B_{\ell_{\frac{q_kp_k}{q_k-p_k}}^{n}}}\sum_{j_{k}=1}^{n}\sum_{%
\widehat{j_{k}}=1}^{n}\frac{|T(e_{j_{1}},...,e_{j_{m}})|^{s}}{%
S_{i}^{s-\lambda_{k-1}}}|x_{j_{k}}|^{\lambda_{k-1}} \\
& =\sup_{x\in B_{\ell_{\frac{q_kp_k}{q_k-p_k}}^{n}}}\sum_{j_{i}=1}^{n}\sum_{%
\widehat{j_{i}}=1}^{n}\frac{|T(e_{j_{1}},...,e_{j_{m}})|^{s}}{%
S_{i}^{s-\lambda_{k-1}}}|x_{j_{k}}|^{\lambda_{k-1}} \\
& =\sup_{x\in B_{\ell _{\frac{q_kp_k}{q_k-p_k}}^{n}}}\sum_{j_{i}=1}^{n}\sum_{%
\widehat{j_{i}}=1}^{n}\frac{|T(e_{j_{1}},...,e_{j_{m}})|^{s-\lambda_{k-1}}}{%
S_{i}^{s-\lambda_{k-1}}}|T(e_{j_{1}},...,e_{j_{m}})|^{%
\lambda_{k-1}}|x_{j_{k}}|^{\lambda_{k-1}} \\
& \leq \sup_{x\in B_{\ell _{\frac{q_kp_k}{q_k-p_k}}^{n}}}\sum_{j_{i}=1}^{n}%
\left(\sum_{\widehat{j_{i}}=1}^{n}\frac{|T(e_{j_{1}},...,e_{j_{m}})|^{s}}{%
S_{i}^{s}}\right) ^{\frac{s-\lambda_{k-1}}{s}}\left( \sum_{\widehat{j_{i}}%
=1}^{n}|T(e_{j_{1}},...,e_{j_{m}})|^{s}|x_{j_{k}}|^{s}\right)^{\frac{1}{s}%
\lambda_{k-1}} \\
& = \sup_{x\in B_{\ell _{\frac{q_kp_k}{q_k-p_k}}^{n}}}\sum_{j_{i}=1}^{n}%
\left(\sum_{\widehat{j_{i}}%
=1}^{n}|T(e_{j_{1}},...,e_{j_{m}})|^{s}|x_{j_{k}}|^{s}\right) ^{\frac{1}{s}%
\lambda_{k-1}} \\
& = \sup_{x\in B_{\ell_{\frac{q_kp_k}{q_k-p_k}}^{n}}}
\sum_{j_{i}=1}^{n}\left( \sum_{\widehat{j_{i}}=1}^{n}\left\vert
T^{(x)}\left(e_{j_{1}},...,e_{j_{m}}\right) \right\vert^{s}\right) ^{\frac{1%
}{s}\lambda_{k-1}} \\
& \leq \left( B_{q_{1},\dots,q_{m}}^{I_{m},s,\lambda _{0},F,n} \Vert T\Vert
\vspace{0.2cm}\right) ^{\lambda _{k-1}}.
\end{align*}
Therefore,
\begin{equation}  \label{fin2}
\left( \sum_{j_{k}=1}^{n}\left( \sum_{\widehat{j_{k}}=1}^{n}\frac{%
|T(e_{j_{1}},...,e_{j_{m}})|^{s}}{S_{i}^{s-\lambda_{k-1}}}\right)^{\frac{%
\lambda_{k}}{\lambda_{k-1}}}\right)^{\frac{\lambda _{k-1}}{\lambda_k}\cdot%
\frac{s-\lambda _{k}}{s-\lambda _{k-1} }}\leq \left(
B_{q_{1},\dots,q_{m}}^{I_{m},s,\lambda _{0},F,n}\Vert T\Vert \right)
^{\lambda _{k-1}\cdot \frac{s-\lambda _{k}}{s-\lambda _{k-1}}}.
\end{equation}
Replacing \eqref{huhi} and \eqref{fin2} in \eqref{ant} we finally conclude
that
\begin{align*}
& \sum_{j_{i}=1}^{n}\left( \sum_{\widehat{j_{i}}%
=1}^{n}|T(e_{j_{1}},...,e_{j_{m}})|^{s}\right) ^{\frac{1}{s}\lambda_{k}} \\
& \leq ( B_{q_{1},\dots,q_{m}}^{I_{m},s,\lambda _{0},F,n} \Vert T\Vert )
^{\lambda _{k-1}\cdot\frac{s-\lambda _{k}}{s-\lambda _{k-1}}}\cdot (
B_{q_{1},\dots,q_{m}}^{I_{m},s,\lambda _{0},F,n} \Vert T\Vert ) ^{\frac{%
\left( \lambda _{k}-\lambda _{k-1}\right) s}{ s-\lambda _{k-1}}} \\
& = (B_{q_{1},\dots,q_{m}}^{I_{m},s,\lambda _{0},F,n})^{\lambda _{k-1}\cdot%
\frac{ s-\lambda _{k}}{s-\lambda _{k-1}}}\cdot
(B_{q_{1},\dots,q_{m}}^{I_{m},s,\lambda _{0},F,n})^{ \frac{\left( \lambda
_{k}-\lambda _{k-1}\right)s }{s-\lambda _{k-1}}}\cdot \Vert T\Vert^{\lambda
_{k-1}\cdot\frac{s-\lambda _{k}}{s-\lambda _{k-1}}}\cdot \Vert T\Vert ^{%
\frac{\left( \lambda _{k}-\lambda _{k-1}\right)s}{ s-\lambda _{k-1}}} \\
& = (B_{q_{1},\dots,q_{m}}^{I_{m},s,\lambda _{0},F,n})^{\lambda_{k}} \Vert
T\Vert ^{\lambda _{k}},
\end{align*}
that is,
\begin{align*}
\left( \sum_{j_{i}=1}^{n}\left( \sum_{\widehat{j_{i}}=1}^{n}\left\vert
T(e_{j_{1}},...,e_{j_{m}})\right\vert ^{s}\right) ^{\frac{1}{s}%
\lambda_{k}}\right) ^{\frac{1}{\lambda_{k}}}& \leq
B_{q_{1},\dots,q_{m}}^{I_{m},s,\lambda _{0},F,n} \Vert T\Vert .
\end{align*}

It remains to consider $k=m$, where $\lambda _{m}=\eta _{1}$. In this case
we have
\begin{align*}
\left( \sum_{j_{i}=1}^{n}\left( \sum_{\widehat{j_{i}}=1}^{n}|T(e_{j_{1}},%
\dots ,e_{j_{m}})|^{s}\right) ^{\frac{1}{s}\eta _{1}}\right) ^{\frac{1}{\eta
_{1}}}& =\left( \sum_{j_{m}=1}^{n}\left( \sum_{\widehat{j_{m}}%
=1}^{n}|T(e_{j_{1}},\dots ,e_{j_{m}})|^{s}\right) ^{\frac{1}{s}\eta
_{1}}\right) ^{\frac{1}{\eta _{1}}} \\
& \leq B_{q_{1},\dots ,q_{m}}^{I_{m},s,\lambda _{0},F,n}\Vert T\Vert ,
\end{align*}%
where the inequality is due to the case $i=k$.

The proof of (b) is similar, except for the last step (case $k=m$), but the
argument is somewhat predictable and we omit the proof.
\end{proof}

\begin{remark}
The case $q_{k}=\infty $ for all $k=1,...,m$ in (a) is known; see, for
instance, \cite{dimant}.
\end{remark}

\section{Main results}

We begin with a technical lemma based on the Contraction Principle (see \cite%
[Theorem 12.2]{Di}). From now on $r_{i}(t)$ are the Rademacher functions.

\begin{lemma}
\label{778}Regardless of the choice of the positive integers $m,N$ and the
scalars $a_{i_{1},\dots ,i_{m}}$, $i_{1},\dots ,i_{m}=1,\dots ,N$,
\begin{equation*}
\max_{\substack{ i_{k}=1,\dots ,N  \\ k=1,\dots ,m}}\left\vert
a_{i_{1},\dots ,i_{m}}\right\vert \leq \int_{\lbrack 0,1]^{m}}\left\vert
\sum_{i_{1},\dots ,i_{m}=1}^{N}r_{i_{1}}(t_{1})\cdots
r_{i_{m}}(t_{m})a_{i_{1},\dots ,i_{m}}\right\vert \,dt_{1}\cdots dt_{m}.
\end{equation*}
\end{lemma}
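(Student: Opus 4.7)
The estimate can be obtained in a few lines by exploiting the orthonormality of the products $r_{i_{1}}(t_{1})\cdots r_{i_{m}}(t_{m})$ in $L^{2}([0,1]^{m})$. I would fix an arbitrary tuple $(j_{1},\dots ,j_{m})$ and bound $|a_{j_{1},\dots ,j_{m}}|$ by the right-hand side; taking the maximum over all such tuples at the very end then yields the claim.

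The key identity is
\[
\int_{[0,1]^{m}} r_{j_{1}}(t_{1})\cdots r_{j_{m}}(t_{m}) \cdot r_{i_{1}}(t_{1})\cdots r_{i_{m}}(t_{m})\,dt_{1}\cdots dt_{m} \;=\; \prod_{k=1}^{m}\delta_{j_{k},i_{k}},
\]
which follows from Fubini and the one-variable orthogonality $\int_{0}^{1} r_{i}(t)r_{j}(t)\,dt = \delta_{ij}$. Multiplying $\sum_{i_{1},\dots ,i_{m}} a_{i_{1},\dots ,i_{m}} r_{i_{1}}(t_{1})\cdots r_{i_{m}}(t_{m})$ by $r_{j_{1}}(t_{1})\cdots r_{j_{m}}(t_{m})$ and integrating (interchange with the finite sum is trivial) therefore extracts exactly the coefficient $a_{j_{1},\dots ,j_{m}}$.

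From there I would pass the absolute value under the integral sign and use $|r_{j_{k}}(t_{k})|=1$ almost everywhere to obtain
\[
|a_{j_{1},\dots ,j_{m}}| \le \int_{[0,1]^{m}}\left| \sum_{i_{1},\dots ,i_{m}=1}^{N} a_{i_{1},\dots ,i_{m}} r_{i_{1}}(t_{1})\cdots r_{i_{m}}(t_{m})\right| dt_{1}\cdots dt_{m},
\]
and taking the maximum over $(j_{1},\dots ,j_{m})$ concludes the argument. There is essentially no obstacle: finiteness of the sums makes all swaps of $\sum$ and $\int$ legitimate, and $|r_{i}(t)|\le 1$ is immediate from the definition. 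The same inequality can also be viewed as a one-line specialization of Kahane's contraction principle (applied with multipliers isolating a single tuple), which is presumably what the reference to \cite{Di} alludes to.
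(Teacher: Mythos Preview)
Your argument is correct. Fixing a tuple $(j_{1},\dots ,j_{m})$, the orthonormality of the tensor Rademacher system in $L^{2}([0,1]^{m})$ indeed yields
\[
a_{j_{1},\dots ,j_{m}}=\int_{[0,1]^{m}} r_{j_{1}}(t_{1})\cdots r_{j_{m}}(t_{m})\left(\sum_{i_{1},\dots ,i_{m}=1}^{N} a_{i_{1},\dots ,i_{m}}\, r_{i_{1}}(t_{1})\cdots r_{i_{m}}(t_{m})\right)\,dt_{1}\cdots dt_{m},
\]
and the bound follows by putting the absolute value inside and using $|r_{j_{k}}|=1$ a.e.

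This is, however, a genuinely different route from the paper's. The paper argues by induction on $m$: writing the integral as an iterated one and applying the Contraction Principle \cite[Theorem~12.2]{Di} in the $t_{1}$-variable (with the inner Rademacher sums as coefficients) reduces the $m$-fold integral to an $(m-1)$-fold one, and the induction hypothesis finishes. Your approach is more elementary---it uses nothing beyond the $L^{2}$-orthogonality $\int_{0}^{1}r_{i}r_{j}=\delta_{ij}$ and the triangle inequality---and avoids invoking the Contraction Principle altogether. The paper's route, on the other hand, makes the link to the Banach-space random series machinery explicit and fits naturally into the narrative of \cite{Di}; but for the scalar (or even Bochner-integrable vector-valued) statement at hand, your Fourier-coefficient extraction is shorter and entirely self-contained. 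Your closing remark that this is a specialization of the Contraction Principle is exactly the connection the paper exploits.
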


\begin{proof}
Essentially, one just need to apply the Contraction Principle successively.
We proceed by induction over $m$. The case $m=1$ is precisely the standard
version of Contraction Principle. For all positive integers $i_{1},\dots
,i_{m}$,
\begin{align*}
& \int_{[0,1]^{m}}\left\vert \sum_{i_{1},\dots
,i_{m}=1}^{N}r_{i_{1}}(t_{1})\cdots r_{i_{m}}(t_{m})a_{i_{1},\dots
,i_{m}}\right\vert \,dt_{1}\cdots dt_{m} \\
& =\int_{[0,1]^{m-1}}\left[ \int_{0}^{1}\left\vert
\sum_{i_{1}=1}^{N}r_{i_{1}}(t_{1})\left( \sum_{i_{2},\dots
,i_{m}=1}^{N}r_{i_{2}}(t_{2})\cdots r_{i_{m}}(t_{m})a_{i_{1},\dots
,i_{m}}\right) \right\vert \,dt_{1}\right] \,dt_{2}\cdots dt_{m} \\
& \geq \int_{\lbrack 0,1]^{m-1}}\left\vert \sum_{i_{2},\dots
,i_{m}=1}^{N}r_{i_{2}}(t_{2})\cdots r_{i_{m}}(t_{m})a_{i_{1},\dots
,i_{m}}\right\vert \,dt_{2}\cdots dt_{m} \\
& \geq \left\vert a_{i_{1},\dots ,i_{m}}\right\vert ,
\end{align*}%
where we used the Contraction Principle and the induction hypothesis on the
first and second inequality, respectively. This concludes the proof.
\end{proof}

Now we are able to prove our first main result, providing better constants
for Theorem \ref{i99}:

\begin{theorem}
\label{76543}Let $m\geq 2$ be a positive integer, $p_{j}>1$ for all $j$ and%
\begin{equation*}
\frac{1}{2}\leq \frac{1}{p_{1}}+\cdots +\frac{1}{p_{m}}<1.
\end{equation*}%
Then
\begin{equation*}
\left( \sum_{j_{1},...,j_{m}=1}^{n}\left\vert T\left( e_{j_{1}},\ldots
,e_{j_{m}}\right) \right\vert ^{\frac{1}{1-\left( \frac{1}{p_{1}}+\cdots +%
\frac{1}{p_{m}}\right) }}\right) ^{1-\left( \frac{1}{p_{1}}+\cdots +\frac{1}{%
p_{m}}\right) }\leq 2^{(m-1)\left( 1-\left( \frac{1}{p_{1}}+\cdots +\frac{1}{%
p_{m}}\right) \right) }\left\Vert T\right\Vert
\end{equation*}%
for all $m$-linear forms $T:\ell _{p_{1}}^{n}\times \cdots \times \ell
_{p_{m}}^{n}\rightarrow \mathbb{K}$ and all positive integers $n$. In
particular, if $m<p\leq 2m$ then
\begin{equation*}
\left( \sum_{j_{1},...,j_{m}=1}^{\infty }\left\vert T\left( e_{j_{1}},\ldots
,e_{j_{m}}\right) \right\vert ^{\frac{p}{p-m}}\right) ^{\frac{p-m}{p}}\leq
2^{\frac{\left( m-1\right) \left( p-m\right) }{p}}\left\Vert T\right\Vert
\end{equation*}
for all continuous $m$-linear forms $T:\ell _{p}\times \cdots \times \ell
_{p}\rightarrow \mathbb{K}.$
\end{theorem}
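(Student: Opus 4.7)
The plan is to recognise the desired estimate as an interpolation between two well-understood endpoint inequalities and to implement this interpolation using the two lemmas just established. Let $\sigma := \sum_{j=1}^m 1/p_j \in [1/2,1)$ and $r := 1/(1-\sigma)$. At $\sigma = 1/2$ the exponent is $r=2$ and Theorem~\ref{i99} supplies the constant $2^{(m-1)/2}$, which coincides with $2^{(m-1)(1-\sigma)}$; at the other extreme, as $\sigma \to 1^{-}$, the exponent diverges and the inequality degenerates to the trivial $\max_{j_1,\dots,j_m}|T(e_{j_1},\dots,e_{j_m})| \le \|T\|$ (valid since $\|e_{j_k}\|_{p_k} = 1$) with constant $1$, again coinciding with $2^{(m-1)(1-\sigma)}$. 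Setting $\theta := 2\sigma - 1 \in [0,1)$, the target constant is exactly the geometric mean $(2^{(m-1)/2})^{1-\theta}\cdot 1^{\theta}$, so it is indeed the correct Riesz--Thorin-type interpolated value.

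My preferred route is multilinear complex interpolation. I would first exhibit auxiliary tuples $(q_j^{(0)})$ and $(q_j^{(1)})$ with $\sum 1/q_j^{(0)} = 1/2$, $\sum 1/q_j^{(1)} = 1$, and $1/p_j = (1-\theta)/q_j^{(0)} + \theta/q_j^{(1)}$ for every $j$; a short arithmetic check shows that such a choice always exists (after the two sum constraints there are $m-1$ degrees of freedom left for each tuple). Then I would construct an analytic family of $m$-linear forms $T_z(x^{(1)},\dots,x^{(m)}) := T(D_z^{(1)} x^{(1)},\dots,D_z^{(m)} x^{(m)})$, where each $D_z^{(k)}$ is a diagonal operator depending holomorphically on $z$ in the strip $\{0 \le \mathrm{Re}(z) \le 1\}$, normalised so that $T_{iy}$ is controlled via Theorem~\ref{i99} on $\ell_{q_k^{(0)}}^n$ and $T_{1+iy}$ is controlled by the trivial bound on $\ell_{q_k^{(1)}}^n$, while $T_\theta = T$. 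Pairing with a matching analytic family of test sequences in $\ell^{r^{\ast}}$ and applying Hadamard's three-lines lemma delivers the required estimate at $z = \theta$ with constant $(2^{(m-1)/2})^{1-\theta}\cdot 1^{\theta} = 2^{(m-1)(1-\sigma)}$.

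An alternative and probably more in the spirit of the paper would be to avoid complex interpolation altogether: apply Lemma~\ref{geral}(a) with a choice of $q_j$'s satisfying $\sum 1/q_j = 1/2$ to transfer the pure $\ell^2$-bound of Theorem~\ref{i99} to a mixed-norm inequality on $\ell_{p_j}$, and then combine this with the Contraction Principle estimate of Lemma~\ref{778}, via a Hölder step whose exponents are dictated by $\theta$, to produce the pure $\ell^{r}$-norm with the improved constant $2^{(m-1)(1-\sigma)}$ in place of the trivial $2^{(m-1)/2}$. The main obstacle in either route is the bookkeeping: one must line up the inner and outer parameters $s$ and $\alpha$ in the definition of $B_{p_1,\dots,p_m}^{A,s,\alpha,F,n}$ so that the interpolation step collapses to the pure $\ell^{r}$-norm appearing in the statement, rather than to a strictly weaker mixed-norm inequality. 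Once the finite-$n$ estimate is secured, the ``in particular'' clause for continuous $T:\ell_p\times\cdots\times\ell_p \to \mathbb{K}$ follows by a routine truncation and passage-to-the-limit argument.
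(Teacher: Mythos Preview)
Your heuristic is right—the constant $2^{(m-1)(1-\sigma)}$ is indeed the geometric interpolant between $(\sqrt 2)^{m-1}$ at $\sigma=1/2$ and $1$ at $\sigma=1$—but both implementations you propose have a genuine gap, not just bookkeeping.

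\textbf{Complex interpolation route.} Your diagonal family must satisfy $D_\theta^{(k)}=I$ together with $\|D_{it}^{(k)}:\ell_{q_k^{(0)}}^n\to\ell_{p_k}^n\|\le 1$. For a diagonal multiplier this norm equals $\|\lambda^{(k)}(it)\|_{s_k}$ with $1/s_k=1/p_k-1/q_k^{(0)}>0$, while $\lambda^{(k)}(\theta)=(1,\dots,1)$ has $\ell_{s_k}$-norm $n^{1/s_k}$; so no analytic family with those boundary bounds can hit the identity at $z=\theta$. More conceptually, you are trying to show that the injective (multilinear) norm $\|T\|_{\ell_{p_1}\times\cdots\times\ell_{p_m}}$ dominates the complex-interpolated norm between the two endpoint multilinear norms. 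Multilinear Riesz--Thorin gives only the reverse inequality $\|T\|_p\le\|T\|_{q^{(0)}}^{1-\theta}\|T\|_{q^{(1)}}^{\theta}$, and interpolation of spaces of multilinear forms in the direction you need is known to fail in general. Concretely, in the symmetric case $p_k=p$ the endpoint $q^{(0)}=2m\ge p$ forces $\|T\|_{q^{(0)}}\ge\|T\|_p$, so the $\ell_2$ bound from Theorem~\ref{i99} is not controlled by $\|T\|_p$ on the line $\mathrm{Re}\,z=0$.

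\textbf{Second route.} Starting from Theorem~\ref{i99} on $\ell_{q_j}$ with $\sum 1/q_j=1/2$ and feeding it into Lemma~\ref{geral}(a) forces $s=\eta_1=r$, and you land on the pure $\ell_r$ bound on $\ell_{p_j}$ with the \emph{unchanged} constant $(\sqrt2)^{m-1}$. A subsequent H\"older step against the trivial $\ell_\infty$ bound only produces $\ell_{r'}$ estimates for $r'>r$; it cannot improve the constant at the target exponent $r$.

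\textbf{What the paper actually does.} The interpolation happens \emph{inside} the Khinchin argument, on the $\ell_\infty$ domain and at the level of the inner norm. For $S:\ell_\infty^n\times\cdots\times\ell_\infty^n\to\mathbb K$ and each fixed outer index $j_1$, log-convexity gives
\[
\Bigl(\sum_{\widehat{j_1}}|S(e_J)|^{s}\Bigr)^{1/s}\le\Bigl(\sum_{\widehat{j_1}}|S(e_J)|^{2}\Bigr)^{1/s}\Bigl(\max_{\widehat{j_1}}|S(e_J)|\Bigr)^{1-2/s}.
\]
The crucial point is that \emph{both} factors are bounded by the \emph{same} Rademacher average $R_n(j_1)$: the $\ell_2$ part via Khinchin (cost $(\sqrt2)^{m-1}$) and the $\ell_\infty$ part via Lemma~\ref{778} (cost $1$). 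This yields the constant $((\sqrt2)^{m-1})^{2/s}=2^{(m-1)(1-\sigma)}$, after which $\sum_{j_1}R_n(j_1)\le\|S\|$ gives a mixed $(1,s)$ inequality on $\ell_\infty$. Only then is Lemma~\ref{geral}(a), with $q_k=\infty$ and $\lambda_0=1$, used to transfer this to the pure $\ell_r$ inequality on $\ell_{p_1}\times\cdots\times\ell_{p_m}$. The order matters: interpolate first on $\ell_\infty$, then descend to $\ell_{p_j}$.
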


\begin{proof}
Let $S:\ell _{\infty }^{n}\times \cdots \times \ell _{\infty
}^{n}\rightarrow \mathbb{K}$ be an $m$-linear form. Consider $s=\left(
1-\left( \frac{1}{p_{1}}+\cdots +\frac{1}{p_{m}}\right) \right) ^{-1}$.
Since $s\geq 2$, from Lemma \ref{778}, H\"{o}lder's inequality and
Khinchin's inequality for multiple sums (\cite{popa}) we have
\begin{align*}
& \sum_{j_{1}=1}^{n}\left( \sum_{\widehat{j_{1}}=1}^{n}\left\vert S\left(
e_{j_{1}},\ldots ,e_{j_{m}}\right) \right\vert ^{s}\right) ^{\frac{1}{s}} \\
& \leq \sum_{j_{1}=1}^{n}\left( \left( \left( \sum_{\widehat{j_{1}}%
=1}^{n}\left\vert S\left( e_{j_{1}},\ldots ,e_{j_{m}}\right) \right\vert
^{2}\right) ^{\frac{1}{2}}\right) ^{\frac{2}{s}}\left( \max_{\widehat{j_{1}}%
}\left\vert S\left( e_{j_{1}},\ldots ,e_{j_{m}}\right) \right\vert \right)
^{1-\frac{2}{s}}\right) \\
& \leq \sum_{j_{1}=1}^{n}\left( \left( (\sqrt{2})^{m-1}R_{n}\right) ^{\frac{2%
}{s}}R_{n}^{1-\frac{2}{s}}\right) \\
& =2^{(m-1)\left( 1-\left( \frac{1}{p_{1}}+\cdots +\frac{1}{p_{m}}\right)
\right) }\sum_{j_{1}=1}^{n}\int_{[0,1]^{m-1}}\left\vert \sum_{\widehat{j_{1}}%
=1}^{n}r_{j_{2}}(t_{2})\cdots r_{j_{m}}(t_{m})S\left( e_{j_{1}},\ldots
,e_{j_{m}}\right) \right\vert \,dt_{2}\cdots dt_{m} \\
& =2^{(m-1)\left( 1-\left( \frac{1}{p_{1}}+\cdots +\frac{1}{p_{m}}\right)
\right) }\int_{[0,1]^{m-1}}\sum_{j_{1}=1}^{n}\left\vert S\left(
e_{j_{1}},\sum_{j_{2}=1}^{n}r_{j_{2}}(t_{2})e_{j_{2}},\ldots
,\sum_{j_{m}=1}^{n}r_{j_{m}}(t_{m})e_{j_{m}}\right) \right\vert
\,dt_{2}\cdots dt_{m} \\
& \leq 2^{\frac{\left( m-1\right) \left( p-m\right) }{p}}\sup_{t_{2},\ldots
,t_{m}\in \lbrack 0,1]}\sum_{j_{1}=1}^{n}\left\vert S\left(
e_{j_{1}},\sum_{j_{2}=1}^{n}r_{j_{2}}(t_{2})e_{j_{2}},\ldots
,\sum_{j_{m}=1}^{n}r_{j_{m}}(t_{m})e_{j_{m}}\right) \right\vert \\
& \leq 2^{(m-1)\left( 1-\left( \frac{1}{p_{1}}+\cdots +\frac{1}{p_{m}}%
\right) \right) }\sup_{t_{2},\ldots ,t_{m}\in \lbrack 0,1]}\left\Vert
S\left( \ \cdot \ ,\sum_{j_{2}=1}^{n}r_{j_{2}}(t_{2})e_{j_{2}},\ldots
,\sum_{j_{m}=1}^{n}r_{j_{m}}(t_{m})e_{j_{m}}\right) \right\Vert \\
& \leq 2^{(m-1)\left( 1-\left( \frac{1}{p_{1}}+\cdots +\frac{1}{p_{m}}%
\right) \right) }\left\Vert S\right\Vert ,
\end{align*}%
where
\begin{equation*}
R_{n}:=\int_{[0,1]^{m-1}}\left\vert \sum_{\widehat{j_{1}}%
=1}^{n}r_{j_{2}}(t_{2})\cdots r_{j_{m}}(t_{m})S\left( e_{j_{1}},\ldots
,e_{j_{m}}\right) \right\vert \,dt_{2}\cdots dt_{m}.
\end{equation*}%
Repeating the same procedure for other indexes we have%
\begin{equation*}
\sum_{j_{i}=1}^{n}\left( \sum_{\widehat{j_{i}}=1}^{n}\left\vert S\left(
e_{j_{1}},\ldots ,e_{j_{m}}\right) \right\vert ^{s}\right) \leq
2^{(m-1)\left( 1-\left( \frac{1}{p_{1}}+\cdots +\frac{1}{p_{m}}\right)
\right) }\left\Vert S\right\Vert
\end{equation*}%
for all $i=1,...,m.$ Hence, from Lemma \ref{geral}, item (a), we conclude
that
\begin{equation*}
\left( \sum_{j_{1},...,j_{m}=1}^{n}\left\vert T\left( e_{j_{1}},\ldots
,e_{j_{m}}\right) \right\vert ^{s}\right) ^{\frac{1}{s}}\leq 2^{(m-1)\left(
1-\left( \frac{1}{p_{1}}+\cdots +\frac{1}{p_{m}}\right) \right) }\left\Vert
T\right\Vert
\end{equation*}%
for all $m$-linear forms $T:\ell _{p_{1}}^{n}\times \cdots \times \ell
_{p_{m}}^{n}\rightarrow \mathbb{K}$ and all positive integers $n$.
\end{proof}

In particular the above result shows that for $m<p\leq m+c$ for a certain
fixed constant $c$, we have a kind of uniform Hardy--Littlewood inequality,
in the sense that there exists a universal constant, independent of $m$,
satisfying the respective inequalities. For instance, if $c=1$ we have

\begin{equation*}
\left( \sum_{j_{1},...,j_{m}=1}^{\infty }\left\vert T\left( e_{j_{1}},\ldots
,e_{j_{m}}\right) \right\vert ^{\frac{p}{p-m}}\right) ^{\frac{p-m}{p}}\leq
2^{\frac{m-1}{m+1}}\left\Vert T\right\Vert <2\left\Vert T\right\Vert
\end{equation*}%
for all continuous $m$-linear forms $T:\ell _{p}\times \cdots \times \ell
_{p}\rightarrow \mathbb{K}$ with $m<p\leq m+1.$

If $p\leq 2m-2$ we are able to improve exponents and constants:

\begin{theorem}
\label{765432}Let $m\geq 2$ be a positive integer and $m<p\leq 2m-2.$ Then,
for all continuous $m$-linear forms $T:\ell _{p}\times \cdots \times \ell
_{p}\rightarrow \mathbb{K}$, we have
\begin{equation*}
\left( \sum_{j_{i}=1}^{n}\left( \sum_{\widehat{j_{i}}=1}^{n}\left\vert
T\left( e_{j_{1}},\ldots ,e_{j_{m}}\right) \right\vert ^{\frac{p}{p-\left(
m-1\right) }}\right) ^{\frac{p-\left( m-1\right) }{p}\cdot \frac{p}{p-m}%
}\right) ^{\frac{p-m}{p}}\leq 2^{\frac{\left( m-1\right) \left( p-m+1\right)
}{p}}\left\Vert T\right\Vert .
\end{equation*}
\end{theorem}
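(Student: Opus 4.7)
The plan is to mirror the proof of Theorem \ref{76543}: first establish a mixed-norm estimate for multilinear forms on $\ell_\infty^n\times\cdots\times\ell_\infty^n$ via the Khinchin/contraction-principle scheme, then transfer to $\ell_p^n\times\cdots\times\ell_p^n$ by an iterated-Holder argument in the spirit of Lemma \ref{geral}. Set $s:=p/(p-m+1)$ and $\alpha:=p/(p-m)$, so the exponents in the theorem become inner $s$, middle $\alpha/s$ and outer $\alpha$, and the target constant is $2^{(m-1)/s}=2^{(m-1)(p-m+1)/p}$.

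For $S:\ell_\infty^n\times\cdots\times\ell_\infty^n\to\mathbb{K}$, I would exploit the hypothesis $p\leq 2m-2$, which is equivalent to $s\geq 2$. Fixing $i=1$, for each $j_1$ split
\[
\Bigl(\sum_{\widehat{j_1}}|S(e_{j_1},e_{\widehat{j_1}})|^{s}\Bigr)^{1/s}\leq \Bigl(\sum_{\widehat{j_1}}|S|^{2}\Bigr)^{1/s}\Bigl(\max_{\widehat{j_1}}|S|\Bigr)^{1-2/s},
\]
apply Khinchin's inequality for multiple sums (contributing $(\sqrt{2})^{m-1}$ to the $\ell^2$-factor) and Lemma \ref{778} to the maximum to obtain $(\sum_{\widehat{j_1}}|S|^{s})^{1/s}\leq 2^{(m-1)/s}R_{j_1}$, where $R_{j_1}:=\int_{[0,1]^{m-1}}|\sum_{\widehat{j_1}}r_{j_2}(t_2)\cdots r_{j_m}(t_m)S(e_{j_1},\dots,e_{j_m})|\,dt$. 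Raise to the $\alpha$-th power, sum over $j_1$, and apply Jensen to get $\sum_{j_1}R_{j_1}^{\alpha}\leq \int_{[0,1]^{m-1}}\sum_{j_1}|S(e_{j_1},y_2(t),\dots,y_m(t))|^{\alpha}\,dt$, where $y_k(t)=\sum_{j_k}r_{j_k}(t_k)e_{j_k}\in B_{\ell_\infty^n}$. For each $t$, the inner linear form $S(\,\cdot\,,y_2(t),\dots,y_m(t))$ on $\ell_\infty^n$ has operator norm equal to the $\ell^1$-norm of its coefficient vector, which dominates the $\ell^\alpha$-norm since $\alpha\geq 1$; this operator norm is at most $\|S\|_{\ell_\infty^m}$. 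Altogether,
\[
\Bigl(\sum_{j_1}\bigl(\sum_{\widehat{j_1}}|S(e_{j_1},e_{\widehat{j_1}})|^{s}\bigr)^{\alpha/s}\Bigr)^{1/\alpha}\leq 2^{(m-1)/s}\|S\|_{\ell_\infty^m},
\]
and by the symmetry of the argument, the same bound holds for every choice of $i\in\{1,\dots,m\}$.

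The final step is to transfer this estimate from $\ell_\infty^n\times\cdots\times\ell_\infty^n$ to $\ell_p^n\times\cdots\times\ell_p^n$ by an iterated-Holder argument modelled on the proof of Lemma \ref{geral}: treat the distinguished coordinate $i$ (carrying the outer exponent $\alpha$) as frozen and shrink each of the remaining coordinates from $\ell_\infty^n$ to $\ell_p^n$ one at a time, introducing Holder weights $x^{(k)}\in B_{\ell_p^n}$ (since $q_kp_k/(q_k-p_k)=p$ when $q_k=\infty$ and $p_k=p$) and taking suprema. This is where I expect the main obstacle: Lemma \ref{geral}(b) cannot be invoked verbatim, because its hypothesis $s\geq\eta_2$ would force $s\geq\alpha$, contradicting the regime $\alpha>s$ of Theorem \ref{765432}. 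Adapting the iteration so that the mixed $\ell^\alpha(\ell^s)$-structure on the distinguished coordinate is preserved throughout (rather than collapsed into a single isotropic exponent as in Lemma \ref{geral}) is the technical crux; the hypothesis $p\leq 2m-2$ is precisely what makes $s\geq 2$ and keeps all Holder exponents arising in the transfer within the admissible range.
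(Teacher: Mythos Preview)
Your first step is where the approaches diverge, and the divergence is exactly what creates the obstacle you flag at the end. The paper does \emph{not} raise to the power $\alpha$ and apply Jensen; it keeps the outer exponent equal to $1$. Concretely, the paper proves
\[
\sum_{j_i=1}^{n}\Bigl(\sum_{\widehat{j_i}=1}^{n}\bigl|S(e_{j_1},\dots,e_{j_m})\bigr|^{s}\Bigr)^{1/s}\le 2^{(m-1)/s}\,\|S\|
\]
for every $i$ and every $m$-linear $S$ on $\ell_\infty^n\times\cdots\times\ell_\infty^n$ --- this is precisely your $R_{j_1}$ argument \emph{before} you weakened it via Jensen. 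In the notation of Lemma~\ref{geral} this reads $B_{\infty,\dots,\infty}^{I_m,s,1,\mathbb{K},n}\le 2^{(m-1)/s}$, i.e.\ $\lambda_0=1$.

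With $\lambda_0=1$, $q_j=\infty$, $p_j=p$, one has
\[
\eta_2=\Bigl[1-\tfrac{m-1}{p}\Bigr]^{-1}=s,\qquad \eta_1=\Bigl[1-\tfrac{m}{p}\Bigr]^{-1}=\alpha,
\]
so the hypothesis $s\ge\eta_2$ of Lemma~\ref{geral}(b) holds with equality, and the lemma applied verbatim produces the outer exponent $\alpha$ on $\ell_p^n\times\cdots\times\ell_p^n$. The outer exponent $\alpha$ is \emph{generated} by the transfer from $\lambda_0=1$; it is not something to be matched on the $\ell_\infty$ side and then preserved. Your assertion that ``$s\ge\eta_2$ would force $s\ge\alpha$'' arises because you are implicitly feeding the lemma $\lambda_0=\alpha$ rather than $\lambda_0=1$; with $\lambda_0=\alpha$ already the first hypothesis $\sum_j 1/p_j<1/\lambda_0$ fails (since $m/p>(p-m)/p$ for $p<2m$), so there is indeed no route from that starting point.

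In short, your scheme is the paper's scheme, but the Jensen step discards exactly the information Lemma~\ref{geral}(b) needs; drop it, keep the outer $\ell^1$ on $\ell_\infty$, and the ``technical crux'' you anticipate disappears.
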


\begin{proof}
Consider $s=\frac{p}{p-\left( m-1\right) }$. Since $p\leq 2m-2$ we have $%
s\geq 2.$

From Lemma \ref{778}, H\"{o}lder's inequality and Khinchin's inequality for
multiple sums (\cite{popa}) we have, as in the proof of Theorem \ref{76543},
\begin{equation*}
\sum_{j_{i}=1}^{n}\left( \sum_{\widehat{j_{i}}=1}^{n}\left\vert T\left(
e_{j_{1}},\ldots ,e_{j_{m}}\right) \right\vert ^{\frac{p}{p-\left(
m-1\right) }}\right) ^{\frac{p-\left( m-1\right) }{p}}\leq 2^{\frac{\left(
m-1\right) \left( p-m+1\right) }{p}}\left\Vert T\right\Vert
\end{equation*}%
for all $m$-linear forms $T\in \mathcal{L}\left( ^{m}\ell _{\infty }^{n};%
\mathbb{K}\right) $ and all positive integers $n$. Note that
\begin{equation*}
\left[ \frac{1}{\lambda _{0}}-\sum_{j=1}^{m-1}\left( \frac{1}{p_{j}}-\frac{1%
}{q_{j}}\right) \right] ^{-1}=\frac{1}{1-\frac{m-1}{p}}=\frac{p}{p-\left(
m-1\right) }=s.
\end{equation*}

From Lemma \ref{geral}, item (b), we conclude that%
\begin{equation*}
\left( \sum_{j_{i}=1}^{n}\left( \sum_{\widehat{j_{i}}=1}^{n}\left\vert
T\left( e_{j_{1}},\ldots ,e_{j_{m}}\right) \right\vert ^{\frac{p}{p-\left(
m-1\right) }}\right) ^{\frac{p-\left( m-1\right) }{p}\cdot \frac{p}{p-m}%
}\right) ^{\frac{p-m}{p}}\leq 2^{\frac{\left( m-1\right) \left( p-m+1\right)
}{p}}\left\Vert T\right\Vert .
\end{equation*}%
for all continuous $m$-linear forms $T:\ell _{p}\times \cdots \times \ell
_{p}\rightarrow \mathbb{K}$.
\end{proof}

If we have the additional hypothesis that
\begin{equation*}
\frac{1}{2}\leq \frac{1}{p_{1}}+\frac{1}{p_{2}}<1
\end{equation*}%
the optimal constants are always bounded by $2^{1-\left(\frac{1}{p_1}+\frac{1%
}{p_2}\right)}$:

\begin{theorem}
\label{7654321}Let $m\geq 3$ and $p_{1},\ldots ,p_{m}\in (1,\infty ]$ be
such that
\begin{equation*}
\frac{1}{2}\leq \frac{1}{p_{1}}+\frac{1}{p_{2}}<1
\end{equation*}%
and
\begin{equation*}
\frac{1}{p_{1}}+\cdots +\frac{1}{p_{m}}<1.
\end{equation*}%
Then
\begin{equation*}
\left( \sum_{j_{m}=1}^{n}\left( \sum_{\widehat{j_{m}}=1}^{n}|T(e_{j_{1}},%
\ldots ,e_{j_{m}})|^{\frac{1}{1-\left( \frac{1}{p_{1}}+\cdots +\frac{1}{%
p_{m-1}}\right) }}\right) ^{\frac{1-\left( \frac{1}{p_{1}}+\cdots +\frac{1}{%
p_{m-1}}\right) }{1-\left( \frac{1}{p_{1}}+\cdots +\frac{1}{p_{m}}\right) }%
}\right) ^{1-\left( \frac{1}{p_{1}}+\cdots +\frac{1}{p_{m}}\right) }\leq
2^{1-\left( \frac{1}{p_{1}}+\frac{1}{p_{2}}\right) }\Vert T\Vert
\end{equation*}%
for all $m$-linear forms $T:\ell _{p_{1}}^{n}\times \cdots \times \ell
_{p_{m}}^{n}\rightarrow \mathbb{K}$ and all positive integers $n$.
\end{theorem}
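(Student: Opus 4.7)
The plan is to reduce the theorem to the bilinear Hardy--Littlewood inequality (Theorem \ref{76543} with $m=2$, which yields the constant $2^{1-(1/p_1+1/p_2)}$) via the transfer provided by Lemma \ref{geral}(b), preceded by a routine monotonicity step. Throughout set $s:=(1-\sum_{j=1}^{m-1}1/p_j)^{-1}$, $\eta:=(1-\sum_{j=1}^m 1/p_j)^{-1}$, and $s_2:=(1-(1/p_1+1/p_2))^{-1}$; from the hypotheses one has $2\leq s_2\leq s\leq\eta$.

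Because $\eta\geq s$, the $\ell^\eta$-norm of a finite nonnegative sequence is dominated by its $\ell^s$-norm, so the target quantity $B^{\{m\},s,\eta,\mathbb{K},n}_{p_1,\ldots,p_m}$ is bounded above by $B^{\{m\},s,s,\mathbb{K},n}_{p_1,\ldots,p_m}$, and it suffices to bound the latter. To do so, I apply Lemma \ref{geral}(b) with $q_1=p_1$, $q_2=p_2$ (the strict hypothesis $p_k<q_k$ being handled by a limiting argument), $q_j=\infty$ for $j\geq 3$, and $\lambda_0=s_2$. A direct check gives $\sum_{j=1}^m(1/p_j-1/q_j)=\sum_{j=3}^m 1/p_j<1-(1/p_1+1/p_2)=1/\lambda_0$ (using $\sum_j 1/p_j<1$), and $\eta_2=(1-\sum_{j=1}^{m-1}1/p_j)^{-1}=s$, so the hypothesis $s\geq\eta_2$ holds with equality. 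Lemma \ref{geral}(b) therefore yields
\begin{equation*}
B^{\{m\},s,s,\mathbb{K},n}_{p_1,\ldots,p_m}\leq B^{I_m,s,s_2,\mathbb{K},n}_{p_1,p_2,\infty,\ldots,\infty}.
\end{equation*}

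The remaining task is to prove the base-case bound $B^{I_m,s,s_2,\mathbb{K},n}_{p_1,p_2,\infty,\ldots,\infty}\leq 2^{1-(1/p_1+1/p_2)}$. For any $m$-linear form $S:\ell_{p_1}^n\times\ell_{p_2}^n\times\ell_\infty^n\times\cdots\times\ell_\infty^n\to\mathbb{K}$ and any $i\in I_m$, the bilinear Hardy--Littlewood inequality applied slice-by-slice to the bilinear form $S(\cdot,\cdot,e_{j_3},\ldots,e_{j_m})$ supplies the exact factor $2^{1/s_2}$, and a Hölder plus Rademacher-averaging argument --- in the spirit of the base-case computation in the proof of Theorem \ref{76543}, but with bilinear HL playing the role the multilinear Khinchin inequality played there on the first two coordinates --- folds the slice estimates into a bound by $\|S\|$.

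The main obstacle is precisely this final base-case step: to preserve the universal-in-$m$ constant $2^{1/s_2}$, the $m-2$ remaining $\ell_\infty$ coordinates must be absorbed into $\|S\|$ by means of Lemma \ref{778} alone, rather than by a multilinear Khinchin bound which would introduce an extra $(\sqrt{2})^{m-2}$ factor and destroy the uniform-in-$m$ character of the constant. The hypothesis $1/p_1+1/p_2\geq 1/2$ (equivalently $s_2\geq 2$) is exactly what permits the standard Hölder interpolation between the $|S|^{s_2}$ and $|S|^s$ quantities used in this final argument.
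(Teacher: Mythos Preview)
Your overall plan is viable, but there is a genuine gap in the transfer step that is more serious than the base-case worry you flag.

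The chain you propose is $B^{\{m\},s,\eta}_{p_1,\ldots,p_m}\leq B^{\{m\},s,s}_{p_1,\ldots,p_m}\leq B^{I_m,s,s_2}_{p_1,p_2,\infty,\ldots,\infty}\leq 2^{1/s_2}$. The middle term $B^{\{m\},s,s}_{p_1,\ldots,p_m}$ is the \emph{flat} $\ell^s$ constant for $m$-linear forms on $\ell_{p_1}^n\times\cdots\times\ell_{p_m}^n$, and whenever $p_m<\infty$ this is \emph{not} uniformly bounded in $n$: since $s=(1-\sum_{j\leq m-1}1/p_j)^{-1}$ is strictly below the optimal Hardy--Littlewood exponent $\eta=(1-\sum_{j\leq m}1/p_j)^{-1}$, the diagonal form $T=\sum_{i=1}^n e_i^{\ast}\otimes\cdots\otimes e_i^{\ast}$ already gives ratio $n^{1/s-1/\eta}\to\infty$. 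Hence the second inequality you extract from Lemma~\ref{geral}(b) cannot hold in general, and your monotonicity step is not merely redundant---it routes the argument through a false assertion. The source of the confusion is a typographical slip in the displayed conclusion of Lemma~\ref{geral}(b): the outer exponent on the left-hand side must be $\eta_1$, not $\eta_2$ (compare the paper's own application of~(b) in the proof of Theorem~\ref{765432}, where the resulting outer exponent is $p/(p-m)=\eta_1$). Read correctly, (b) gives $B^{\{m\},s,\eta}_{p_1,\ldots,p_m}\leq B^{I_m,s,s_2}_{p_1,p_2,\infty,\ldots,\infty}$ directly, and the monotonicity step disappears.

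As for the base case you flag as the ``main obstacle'': your worry about an extra $(\sqrt{2})^{m-2}$ is unfounded, and Lemma~\ref{778} is not the right tool. Because $s_2\geq 2$, the Khinchin constant $A_{s_2}^{-1}$ equals $1$, so Rademacher-averaging over the $m-2$ remaining $\ell_\infty$ coordinates costs nothing. Concretely, for any $i\in I_m$ and any $S:\ell_{p_1}^n\times\ell_{p_2}^n\times(\ell_\infty^n)^{m-2}\to\mathbb{K}$ one has, using $s\geq s_2\geq 2$,
\[
\Bigl(\sum_{j_i}\Bigl(\sum_{\widehat{j_i}}|S|^{s}\Bigr)^{\frac{s_2}{s}}\Bigr)^{\frac{1}{s_2}}
\;\leq\;\Bigl(\sum_{j_1,\ldots,j_m}|S|^{s_2}\Bigr)^{\frac{1}{s_2}}
\;\leq\;\Bigl(\sum_{j_1,j_2}\Bigl(\sum_{j_3,\ldots,j_m}|S|^{2}\Bigr)^{\frac{s_2}{2}}\Bigr)^{\frac{1}{s_2}}
\;\leq\; 2^{\frac{1}{s_2}}\|S\|,
\]
the last step by Khinchin (constant $1$) in $j_3,\ldots,j_m$ followed by the bilinear Hardy--Littlewood inequality on $(j_1,j_2)$. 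This is exactly the mechanism the paper uses (spelled out there for $m=3$, after which it invokes the inclusion theorem of~\cite{rp} and inducts; your route through the corrected Lemma~\ref{geral}(b) treats all $m$ at once and is in that sense more self-contained).
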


\begin{proof}
Since $\frac{1}{p_{1}}+\frac{1}{p_{2}}\geq \frac{1}{2}$ by Theorem \ref%
{76543} we have
\begin{equation*}
\left( \sum_{i,j=1}^{n}|T_{2}(e_{i},e_{j})|^{\frac{1}{1-\left( \frac{1}{p_{1}%
}+\frac{1}{p_{2}}\right) }}\right) ^{1-\left( \frac{1}{p_{1}}+\frac{1}{p_{2}}%
\right) }\leq 2^{1-\left( \frac{1}{p_{1}}+\frac{1}{p_{2}}\right) }\Vert
T\Vert
\end{equation*}%
for all bilinear forms $T_{2}:\ell _{p_{1}}^{n}\times \ell
_{p_{2}}^{n}\rightarrow \mathbb{K}$ and all positive integers $n$. By the
Khinchin inequality we conclude that
\begin{equation*}
\left( \sum\limits_{i,j=1}^{n}\left( \sum\limits_{k=1}^{n}\left\vert
T_{3}\left( e_{i},e_{j},e_{k}\right) \right\vert ^{2}\right) ^{\frac{1}{2}.%
\frac{1}{1-\left( \frac{1}{p_{1}}+\frac{1}{p_{2}}\right) }}\right)
^{1-\left( \frac{1}{p_{1}}+\frac{1}{p_{2}}\right) }\leq 2^{1-\left( \frac{1}{%
p_{1}}+\frac{1}{p_{2}}\right) }\Vert T\Vert
\end{equation*}%
for all $3$-linear forms $T_{3}:\ell _{p_{1}}^{n}\times \ell
_{p_{2}}^{n}\times \ell _{\infty }^{n}\rightarrow \mathbb{K}$ and all
positive integers $n$. In fact, for all positive integers $n$ we have
\begin{align*}
& \left( \sum\limits_{i,j=1}^{n}\left( \sum\limits_{k=1}^{n}\left\vert
T_{3}\left( e_{i},e_{j},e_{k}\right) \right\vert ^{2}\right) ^{\frac{1}{2}.%
\frac{1}{1-\left( \frac{1}{p_{1}}+\frac{1}{p_{2}}\right) }}\right)
^{1-\left( \frac{1}{p_{1}}+\frac{1}{p_{2}}\right) } \\
& \leq A_{\frac{1}{1-\left( \frac{1}{p_{1}}+\frac{1}{p_{2}}\right) }%
}^{-1}\left( \sum\limits_{i,j=1}^{n}\int_{0}^{1}\left\vert
\sum\limits_{k=1}^{n}r_{k}(t)T_{3}\left( e_{i},e_{j},e_{k}\right)
\right\vert ^{\frac{1}{1-\left( \frac{1}{p_{1}}+\frac{1}{p_{2}}\right) }%
}dt\right) ^{1-\left( \frac{1}{p_{1}}+\frac{1}{p_{2}}\right) } \\
& =\left( \int_{0}^{1}\sum\limits_{i,j=1}^{n}\left\vert T_{3}\left(
e_{i},e_{j},\sum\limits_{k=1}^{n}r_{k}(t)e_{k}\right) \right\vert ^{\frac{1}{%
1-\left( \frac{1}{p_{1}}+\frac{1}{p_{2}}\right) }}dt\right) ^{1-\left( \frac{%
1}{p_{1}}+\frac{1}{p_{2}}\right) } \\
& \leq \sup_{t\in \lbrack 0,1]}\left( \sum\limits_{i,j=1}^{n}\left\vert
T_{3}\left( e_{i},e_{j},\sum\limits_{k=1}^{n}r_{k}(t)e_{k}\right)
\right\vert ^{\frac{1}{1-\left( \frac{1}{p_{1}}+\frac{1}{p_{2}}\right) }%
}\right) ^{1-\left( \frac{1}{p_{1}}+\frac{1}{p_{2}}\right) } \\
& \leq 2^{1-\left( \frac{1}{p_{1}}+\frac{1}{p_{2}}\right) }\left\Vert
T_{3}\right\Vert
\end{align*}%
Thus, since $\left( 1-\left( \frac{1}{p_{1}}+\frac{1}{p_{2}}\right) \right)
^{-1}\geq 2$,
\begin{equation*}
\left( \sum\limits_{i,j,k=1}^{n}\left\vert T_{3}\left(
e_{i},e_{j},e_{k}\right) \right\vert ^{\frac{1}{1-\left( \frac{1}{p_{1}}+%
\frac{1}{p_{2}}\right) }}\right) ^{1-\left( \frac{1}{p_{1}}+\frac{1}{p_{2}}%
\right) }\leq 2^{1-\left( \frac{1}{p_{1}}+\frac{1}{p_{2}}\right) }\left\Vert
T_{3}\right\Vert
\end{equation*}%
for all $3$-linear forms $T_{3}:\ell _{p_{1}}^{n}\times \ell
_{p_{2}}^{n}\times \ell _{\infty }^{n}\rightarrow \mathbb{K}$ and all
positive integers $n$. This means that for any Banach spaces $%
E_{1},E_{2},E_{3},$ every continuous $3$-linear form $R:E_{1}\times
E_{2}\times E_{3}\rightarrow \mathbb{K}$ is multiple $\left( \frac{1}{%
1-\left( \frac{1}{p_{1}}+\frac{1}{p_{2}}\right) };p_{1}^{\ast },p_{2}^{\ast
},1\right) $-summing (see \cite{dimant}). By the essence of the inclusion
theorem for multiple summing operators proved in \cite{rp}, since
\begin{equation*}
\frac{1}{1}-\frac{1}{\frac{1}{1-\left( \frac{1}{p_{1}}+\frac{1}{p_{2}}%
\right) }}=\frac{1}{p_{3}^{\ast }}-\frac{1}{\frac{1}{1-\left( \frac{1}{p_{1}}%
+\frac{1}{p_{2}}+\frac{1}{p_{3}}\right) }},
\end{equation*}%
with $E_{1}=\ell _{p_{1}}^{n}$, $E_{2}=\ell _{p_{2}}^{n}$ and $E_{3}=\ell
_{p_{3}}^{n},$ we conclude that
\begin{eqnarray*}
&&\left( \sum\limits_{k=1}^{n}\left( \sum\limits_{i,j=1}^{n}\left\vert
S\left( e_{i},e_{j},e_{k}\right) \right\vert ^{\frac{1}{1-\left( \frac{1}{%
p_{1}}+\frac{1}{p_{2}}\right) }}\right) ^{\left( 1-\left( \frac{1}{p_{1}}+%
\frac{1}{p_{2}}\right) \right) \cdot \frac{1}{1-\left( \frac{1}{p_{1}}+\frac{%
1}{p_{2}}+\frac{1}{p_{3}}\right) }}\right) ^{1-\left( \frac{1}{p_{1}}+\frac{1%
}{p_{2}}+\frac{1}{p_{3}}\right) } \\
&\leq &2^{1-\left( \frac{1}{p_{1}}+\frac{1}{p_{2}}\right) }\left\Vert
S\right\Vert
\end{eqnarray*}%
for all $3$-linear forms $S:\ell _{p_{1}}^{n}\times \ell _{p_{2}}^{n}\times
\ell _{p_{3}}^{n}\rightarrow \mathbb{K}$ and all positive integers $n$. The
proof is completed by a standard induction argument.
\end{proof}

\end{document}